\documentclass[reqno,10pt]{amsart}
\usepackage[dvipsnames,usenames]{color} 
\usepackage[toc,page]{appendix}

\usepackage{tikz} 
\usepackage{mathcomp,wasysym}  

\usepackage{hyperref}
\usepackage[mathscr]{euscript}  
        
\usepackage{cite}      
\usepackage{graphicx}  
\usepackage[all]{xy} \xyoption{arc} \xyoption{color}

\usepackage{amsmath} 
\usepackage{amsthm} 
\usepackage{amsfonts}  
\usepackage{amssymb} 

\usepackage{verbatim}

\oddsidemargin  .25 in
\evensidemargin .25 in
\textwidth 6.0in


\DeclareMathAlphabet{\mathpzc}{OT1}{pzc}{m}{it}

\newcommand{\vertiii}[1]{{\left\vert\kern-0.25ex\left\vert\kern-0.25ex\left\vert #1 
    \right\vert\kern-0.25ex\right\vert\kern-0.25ex\right\vert}}

\numberwithin{equation}{section} 

\newtheorem{theorem}{Theorem}[section]

\def\bbR{{\mathbb R}}

\newcommand{\pare}[1]{\left( #1 \right)}

\def\p{{\partial\hspace{1pt}}}

\def\comm#1#2{{\big[\hspace{-3.2pt}\big[#1,#2\hspace{1pt}\big]\hspace{-3.2pt}\big]}}

\def\jump#1{{[\hspace{-2pt}[#1]\hspace{-2pt}]}}

\title{Interfaces in incompressible flows}

\author[R.Granero-Belinch\'{o}n]{Rafael Granero-Belinch\'{o}n}
\email{rafael.granero@unican.es}
\address{Departamento  de  Matem\'aticas,  Estad\'istica  y  Computaci\'on, Universidad  de Cantabria, Santander,  Spain}


\setcounter{secnumdepth}{3}
\setcounter{tocdepth}{1}

\begin{document}

\begin{abstract}
The motion of both internal and surface waves in incompressible fluids under capillary and gravity forces is a major research topic. In particular, we review the derivation of some new models describing the dynamics of gravity-capillary nonlinear waves in incompressible flows. These models take the form of both bidirectional and unidirectional nonlinear and nonlocal wave equations. More precisely, with the goal of telling a more complete story, in this paper we present the results in the works \cite{GS,aurther2019rigorous,granero2019models,granero2019well,granero2020well,granero2021motion,
granero2021global} together with some new results regarding the well-posedness of the resulting PDEs.
\end{abstract}

\maketitle
{\small
\tableofcontents}
\section{Introduction}
Water waves have been the object of study of many different researchers along History. This is a \emph{familiar} problem in the sense that everyone has seen water waves in a beach at some point. However, this is also a very challenging problem. Actually, in the Feynmann Lecture on Physics there is already a warning about the difficulty of the motion of water waves:
\begin{quote}
``\emph{Now, the next waves of interest, that are easily seen by everyone and which are usually used as an example of waves in elementary courses, are water waves. As we shall soon see, they are the worst possible example, because they are in no respects like sound and light; they have all the complications that waves can have.}".\\
\emph{The Feynman Lectures on Physics Vol. I Ch. 51: Waves}, Richard Feynmann 
\end{quote}

In \emph{most} of the cases, it is assumed that water can be approximated as an incompressible, irrotational and inviscid fluid. In two dimensions, the evolution of such a fluid is given by the Euler equations
\begin{align*}
\rho\left(u_t+(u\cdot\nabla)u\right)+\nabla p+g\rho e_2&=0\quad x\in \Omega,\,t\in[0,T]\\
\partial_t \rho+ \nabla \cdot(u\rho)&=0\quad x\in \Omega,\,t\in[0,T]\\
\nabla\cdot u&=0\quad x\in \Omega,\,t\in[0,T]\\
\nabla\times u&=0\quad x\in \Omega,\,t\in[0,T],
\end{align*}
where, $\Omega\subset \bbR^2$ is the spatial domain, $T>0$ and $u=(u_1,u_2)$, $\rho$, $p$ and $g$ denote the incompressible velocity field, the density of the fluid, the pressure and the acceleration due to gravity, respectively. 

However, usually in real applications, there is one or several fluids filling several \emph{moving} domains separated by \emph{moving} interfaces. One can take the sea as an example. There, one fluid (water) moves in a (time-dependent) region that is bounded below by the sea's bottom and above by another fluid (air). The water wave is then the (time-dependent) \emph{interface} between these two fluids (water and air). In this type of problems (known in the literature as \emph{free boundary} problems), the (time-dependent) domain of equations, $\Omega=\Omega(t)$, is also an unknown of the problem, and, as such, it has to be recovered from the dynamics (see figure \ref{fig0}).

Due to its importance, the study of water waves is a classical research topic in Mathematics and also in Physics or Engineering \cite{craik2004origins,GraneroGaceta}. Actually, the mathematical study of water waves leads to new theories and tools like new functional inequalities (as Strichartz and Morawetz inequalities), new paralinearization techniques or a deeper study of the Dirichlet-to-Neumann operator.

Even if great mathematicians like Euler, Lagrange, Laplace or Cauchy contributed to our knowledge of the motion of water waves, we could argue that the modern mathematical study of water waves originates with the classical works of Stokes \cite{stokes1846report,stokes1880theory,craik2005george}. This research program was later continued by Boussinesq. In particular, Boussinesq's goal was to describe most of the fluid dynamics in certain physical regime while, at the same time, the problem becomes somehow simplified by neglecting unimportant effects. As he was mainly interested in shallow water waves, Boussinesq neglected terms whose contribution is not relevant for this case. By doing so he obtained new asymptotic models (such as the KdV equation for instance) that describe the main dynamics in the shallow water regime. Furthermore, Boussinesq also opened the door to find other partial differential equations that capture the fluid behaviour under many realistic possible hypotheses in the range of the physical parameters (depth of the fluid layer, the amplitude and wavelength of the wave, viscosity, etc). 

The purpose of this paper is to review some partial differential equations arising in the study of interfaces in incompressible flows and to present some new mathematical results. Although we will focus in models for the case of Euler and Navier-Stokes equations, similar models have been obtained and studied for waves in porous media in \cite{granero2019asymptotic,granero2020asymptotic}. We start with the models for waves with small steepness and viscosity in \cite{granero2019models,granero2019well,granero2020well,granero2021motion,granero2021global}. In particular, in section \ref{GSO} we present the models for viscous fluids with shear viscosiy in \cite{granero2019models,granero2019well,granero2020well,granero2021global} and the models for viscous fluids with odd or Hall viscosity in \cite{granero2021motion}. In section \ref{CGSW} we review the models in \cite{aurther2019rigorous} for inviscid fluids. In addition, we will review the models in \cite{GS} for internal waves in incompressible fluids in section \ref{GS}. Furthermore, we will present a new improvement of these models for the case of the Kelvin-Helmholtz instability. In section \ref{Num}, we will also present some numerical simulations comparing the models in \cite{GS} and \cite{aurther2019rigorous} with simulations of the full water wave system. In this section we also present new simulations of falling drops and rising bubbles. Later, in section \ref{new}, we present some new mathematical results regarding the well-posedness of some of the asymptotic models described in this paper. More precisely, we prove that, for small initial data in $H^1$, the solution of the unidirectional wave propagation for viscous fluids exists globally (see Theorem \ref{theorem1}). Finally, we also prove the local well-posedness for the unidirectional wave propagation for inviscid fluids and analytic initial data (see Theorem \ref{theorem2}). 

\subsection*{Notation}
In this paper we use the following notation for time derivatives and the commutator between an operator and the multiplication by a function
$$
f_t=\frac{\partial f}{\partial t },\quad 
\comm{\mathcal{T}}{f}g=\mathcal{T}(fg)-f\mathcal{T}(g).  
$$
We write
$$
\partial_1 f=\frac{\partial f}{\partial x_1}, \quad \partial_2 f=\frac{\partial f}{\partial x_2},\quad \nabla=\left(\partial_{1},\partial_2\right).
$$

Similarly, we denote $\varepsilon, \beta \text{ and } \alpha$ three dimensionless quantities that measure the nonlinearity, the effect of the surface tension and the effect of viscosity, respectively.

We also introduce the Hilbert transform
\begin{equation}\label{Hilbert}
H f( \alpha) = \frac{1}{\pi} P.V. \int_\bbR \frac{f (\beta)}{ \alpha - \beta } d \beta  \,.
\end{equation} 
This singular integral operator is the following multiplier operator in Fourier variables
$$
\widehat{\mathcal{H}f}(k)=-i\text{sgn}(k) \hat{f}(k).
$$
Finally, we introduce $\Lambda$, the square root of the Laplacian operator,  
$$
\widehat{\Lambda f}(k)=|k|\hat{f}(k).
$$

\section{Models of surface waves in viscous fluids: truncation in the steepness}\label{GSO}
For most of the applications in coastal engineering, water waves are assumed to be inviscid. This assumption, although physically absurd, lead to very accurate results. However, as already stated by the celebrated mathematician and oceanographer Longuet-Higgins \cite{longuet1992theory} 

\begin{quote}
\emph{For certain applications, however, viscous damping of the waves is important, and it would be highly convenient to have equations and boundary conditions of comparable simplicity as for undamped waves.}
\end{quote}

The purpose of this section is to review recent advances in this program for both the case of newtonian and non-newtonian fluids.

The equations describing the motion of a homogeneous incompressible fluid take the following form
\begin{align*}
u_t+(u\cdot\nabla)u-\nabla\cdot \mathscr{T}&=0 \quad x\in\Omega(t),\; t\in[0,T],\\
\nabla\cdot u&=0 \quad x\in\Omega(t),\; t\in[0,T],
\end{align*}
where $u$ and $\mathscr{T}$ denote the velocity and stress tensor of the fluid respectively. This stress tensor takes different forms depending on the physical properties of the fluid. The presence of viscosity creates a boundary layer where the vorticity is large. Outside this boundary layer beneath the surface wave, the flow is nearly irrotational \cite{lamb1932hydrodynamics} (see also \cite{boussinesq1895lois}). The idea is then to capture the effect of the boundary layer as a new viscous contribution at the free boundary. As a consequence, irrotational models with an adapted boundary condition at the free surface lead to accurate descriptions of the physical phenomena \cite{dias2008theory,jiang1996moderate,longuet1992theory,wu2006note}. 

\begin{figure}[h]
\begin{center} 
\begin{tikzpicture}[domain=0:3*pi, scale=0.7] 
\draw (pi,1.7) node { Air};
\draw[line width=4mm, smooth, color=blue] plot (\x,{0.2*sin(\x r)+1});
\fill[blue!10] plot[domain=0:3*pi] (\x,0) -- plot[domain=3*pi:0] (\x,{0.2*sin(\x r)+1});
\draw[very thick,<->] (3*pi+0.4,0) node[right] {$x$} -- (0,0) -- (0,2) node[above] {$z$};
\node[right] at (2*pi,1.7) {$\Gamma(t)$};
\node[right] at (3*pi+0.1,1) {Boundary layer};
\node[right] at (1,0.5) {Fluid};
\node[right] at (3,0.5) {$\Omega(t)$};
\end{tikzpicture}  
\end{center}
\caption{\emph{A representation of a surface wave in a fluid with viscosity. The blue curve is an illustration of the interface $\Gamma(t)$ and the boundary layer beneath the fluid.}}
 \label{fig1} 
\end{figure}
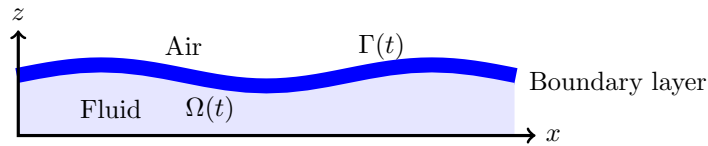

\subsection{The Zakharov formulation of the Euler system}
Let us first consider the motion of an irrotational perfect fluid bounded above by a surface wave. Then, the corresponding Euler system reads as follows
\begin{align}
\label{euler1}
\rho\left(u_t+(u\cdot\nabla)u\right)+\nabla p+g\rho e_2&=0\quad x\in \Omega(t),\,t\in[0,T]\\
\partial_t \rho+ u\cdot\nabla \rho&=0\quad x\in \Omega(t),\,t\in[0,T]\\
\nabla\cdot u&=0\quad x\in \Omega(t),\,t\in[0,T]\\
\nabla\times u&=0\quad x\in \Omega(t),\,t\in[0,T].
\end{align}
We assume that the surface wave can be described as the graph of certain function $h$. Then, the appropriate boundary conditions at the surface wave are
\begin{equation}\label{jump_cond1}
h_t=u\cdot(-\partial_1 h,1)\,, \ \ \ p=-\gamma\mathcal{K} \,, \ \ \text{ on } \Gamma(t) \,,
\end{equation}
where $\gamma>0$ is the surface tension coefficient and $\mathcal{K}$ is the curvature of the wave. As the fluid is irrotational, the velocity is described in terms of the velocity potential
$$
u=\nabla\phi,
$$
and, following the pioneer work of Zakharov \cite{zakharov1968stability}, we have that the free boundary irrotational Euler equations with surface tension are equivalent to the following system of PDEs in dimensionless form
\begin{subequations}\label{eq:zakharov}
\begin{align}
\Delta \phi&=0&&\text{ in }\Omega(t)\times[0,T],\\
\phi  &= \chi \qquad &&\text{ on }\Gamma(t)\times[0,T],\\
\chi_t &=-\frac{\varepsilon}{2}|\nabla \phi|^2- h+\frac{\beta\partial_{1}^2h}{\left(1+\left(\varepsilon\partial_{1}h\right)^2\right)^{3/2}} +\varepsilon\partial_2\phi\left(\nabla\phi\cdot(-\varepsilon\partial_1 h,1)\right)&&\text{ on }\Gamma(t)\times[0,T],\\
h_t&=\nabla\phi\cdot (-\varepsilon\partial_1 h,1)&&\text{ on }\Gamma(t)\times[0,T],
\end{align}
\end{subequations}
Then, in terms of the Zakharov formulation, the previous idea to capture the contribution of the viscous boundary layer below the surface wave is to modify equations (\ref{eq:zakharov}c) and (\ref{eq:zakharov}d) to take into consideration the viscous effects and the form of the stress tensor $\mathscr{T}$. By doing this, the resulting free boundary problem maintains the irrotationality condition while capturing viscous effects near the surface wave.

\subsection{Models for the case of Newtonian fluids}
The case of viscous newtonian fluids where the tensor takes the classical form
$$
\mathscr{T}^i_j=-p\delta^i_j+\nu_e\left(\nabla_ju_i+\nabla_i u_j\right).
$$
This viscosity is called \emph{shear} viscosity. In this case the equations take the following form
\begin{align*}
u_t+(u\cdot\nabla)u+\nabla p -\nu_e \Delta u&=0\quad x\in\Omega(t),\; t\in[0,T],,\\
\nabla\cdot u&=0\quad x\in\Omega(t),\; t\in[0,T].
\end{align*}
Due to the symmetries of the tensor and to distinguish if from other (non-newtonian) viscosity stresses, we will also call this stress \emph{even} viscosity. Of course, it is not surprising that dealing with the case of the standard free surface Navier-Stokes, there are plenty of works proposing different approaches to this problem (see for instance \cite{boussinesq1895lois,dias2008theory,lamb1932hydrodynamics,jiang1996moderate,longuet1992theory,
ruvinsky1991numerical,
wu2006note} among many others). On the one hand, we consider the model
\begin{subequations}\label{eq:1}
\begin{align}
\Delta \phi&=0&&\text{ in }\Omega(t)\times[0,T],\\
\phi  &= \chi \qquad &&\text{ on }\Gamma(t)\times[0,T],\\
\chi_t &=-\frac{\varepsilon}{2}|\nabla \phi|^2- h+\frac{\beta\partial_{1}^2h}{\left(1+\left(\varepsilon\partial_{1}h\right)^2\right)^{3/2}}-\alpha\partial_{2}^2\phi &&\nonumber\\
&\quad+\varepsilon\partial_2\phi\left(\nabla\phi\cdot(-\varepsilon\partial_1 h,1)\right)&&\text{ on }\Gamma(t)\times[0,T],\\
h_t&=\nabla\phi\cdot (-\varepsilon\partial_1 h,1)&&\text{ on }\Gamma(t)\times[0,T].
\end{align}
\end{subequations}
This and other similar models have been proposed and studied by Longuet-Higgins \cite{longuet1992theory}, Ruvinsky, Feldstein \& Freidman \cite{ruvinsky1991numerical} and Wu, Liu \& Yue \cite{wu2006note} (see also \cite{jiang1996moderate}). On the other hand, Dias, Dyachenko and Zakharov \cite{dias2008theory} proposed the following modification having dissipative effects also on the surface wave
\begin{subequations}\label{eq:2}
\begin{align}
\Delta \phi&=0&&\text{ in }\Omega(t)\times[0,T],\\
\phi  &= \chi \qquad &&\text{ on }\Gamma(t)\times[0,T],\\
\chi_t &=-\frac{\varepsilon}{2}|\nabla \phi|^2- h+\frac{\beta\partial_{1}^2h}{\left(1+\left(\varepsilon\partial_{1}h\right)^2\right)^{3/2}}-\alpha\partial_{2}^2\phi &&\nonumber\\
&\quad+\varepsilon\partial_2\phi\left(\nabla\phi\cdot(-\varepsilon\partial_1 h,1)+\alpha \partial_1^2 h\right)&&\text{ on }\Gamma(t)\times[0,T],\\
h_t&=\nabla\phi\cdot (-\varepsilon\partial_1 h,1)+\alpha\partial_{1}^2 h&&\text{ on }\Gamma(t)\times[0,T].
\end{align}
\end{subequations}
To have a system of PDEs that includes both approaches, we define the following free boundary problem \cite{granero2019models}:
\begin{subequations}\label{eq:all}
\begin{align}
\Delta \phi&=0&&\text{ in }\Omega(t)\times[0,T],\\
\phi  &= \chi \qquad &&\text{ on }\Gamma(t)\times[0,T],\\
\chi_t &=-\frac{\varepsilon}{2}|\nabla \phi|^2- h+\frac{\beta\partial_{1}^2h}{\left(1+\left(\varepsilon\partial_{1}h\right)^2\right)^{3/2}}-\alpha_1\partial_{2}^2\phi &&\nonumber\\
&\quad+\varepsilon\partial_2\phi\left(\nabla\phi\cdot(-\varepsilon\partial_1 h,1)+\alpha_2 \partial_1^2 h\right)&&\text{ on }\Gamma(t)\times[0,T],\\
h_t&=\nabla\phi\cdot (-\varepsilon\partial_1 h,1)+\alpha_2\partial_{1}^2 h&&\text{ on }\Gamma(t)\times[0,T],
\end{align}
\end{subequations}
Our goal now is to derive new asymptotic models for the previous system \eqref{eq:all} modelling viscous water waves. In order to do that we introduce the following diffeomorphism
$$
\Psi(x_1,x_2,t)=(x_1,x_2+\varepsilon h(x_1,t)).
$$
We observe that this diffeomorphism maps the reference domain $\Omega=\mathbb{R}^2_-$ onto the moving domain $\Omega(t)$,
$$
\Psi:\Omega\mapsto\Omega(t).
$$
Using this diffeomorphism to fix the domain together with the chain rule (see for instance \cite{aurther2019rigorous, Coutand-Shkoller:well-posedness-free-surface-incompressible, granero2019asymptotic, Lannes1, ngom2018well}), we find the following Arbitrary Lagrangian-Eulerian formulation of \eqref{eq:all}
\begin{subequations}\label{eq:ALE}
\begin{align}
A^\ell_j\partial_\ell\left(A^k_j\partial_k \Phi\right)&=0&&\text{ in }\Omega\times[0,T],\\
\Phi  &= \chi \qquad &&\text{on }\Gamma\times[0,T],\\
\chi_t &=-\frac{\varepsilon}{2}A^k_j\partial_k\Phi A^\ell_j\partial_\ell\Phi- h+\frac{\beta\partial_{1}^2h}{\left(1+\left(\varepsilon\partial_{1}h\right)^2\right)^{3/2}} &&\nonumber\\
&\quad+\varepsilon A^k_2\partial_k\Phi\left(A^\ell_j\partial_\ell\Phi A^2_j+\alpha_2 \partial_1^2 h\right)-\alpha_1A^\ell_2\partial_\ell\left(A^k_2\partial_k \Phi\right)&&\text{ on }\Gamma\times[0,T],\\
h_t&=A^k_j\partial_k\Phi A^2_j+\alpha_2\partial_{1}^2 h&&\text{ on }\Gamma\times[0,T],
\end{align}
\end{subequations}
where
\begin{align*}
\Phi = \phi \circ \Psi\;\text{ and }A=(\nabla\Psi)^{-1},
\end{align*}
and the reference boundary is $\Gamma=\mathbb{R}$. Using the explicit value of the cofactor matrix $A$, we write the previous system \eqref{eq:all} in the following form
\begin{subequations}\label{eq:all2}
\begin{align}
\Delta \Phi &={\varepsilon\left(\partial_1^2 h \ \partial_2 \Phi + 2\partial_1 h \ \partial_{12}\Phi\right)-\varepsilon^2(\partial_1 h)^2\partial^2_2\Phi}  ,  &&\text{in } \Omega\times[0,T]\,,\\
\Phi  &= \chi \qquad &&\text{on }\Gamma\times[0,T],\\
\chi_t &=-\frac{\varepsilon}{2}\left[(\partial_1\Phi)^2+(\varepsilon \partial_1 h\partial_2\Phi)^2+(\partial_2\Phi)^2-2\varepsilon \partial_1 h\partial_2\Phi\partial_1\Phi\right]&&\nonumber\\
&\quad- h+\frac{\beta\partial_{1}^2h}{\left(1+\left(\varepsilon\partial_{1}h\right)^2\right)^{3/2}}-\alpha_1\partial_2^2 \Phi &&\nonumber\\
&\quad+\varepsilon \partial_2\Phi\left(-\varepsilon \partial_1 h \partial_1 \Phi+\varepsilon^2 (\partial_1 h)^2 \partial_2\Phi +\partial_2\Phi+\alpha_2 \partial_1^2 h\right)&&\text{ on }\Gamma\times[0,T],\\
h_t&=-\varepsilon \partial_1 h \partial_1 \Phi+\varepsilon^2 (\partial_1 h)^2 \partial_2\Phi +\partial_2\Phi+\alpha_2\partial_{1}^2 h&&\text{ on }\Gamma\times[0,T].
\end{align}
\end{subequations}
We observe that the different order of $\varepsilon$ appear explicitly in this latter form, being this the main benefit of this formulation. Motivated by the already mentioned program initiated by Longuet-Higgins \cite{longuet1992theory}, we are interested in a model approximating \eqref{eq:all2} with an error $\mathcal{O}(\varepsilon^2)$. To find such a model, we introduce the following ansatz:
\begin{equation}
\label{eq:ansatz}
\begin{aligned}
\Phi(x_1,x_2,t) & = \sum_n \varepsilon^n \Phi^{(n)}(x_1,x_2,t), \\
\chi(x_1,t) & = \sum_n \varepsilon^n \chi^{(n)}(x_1,t), \\
h(x_1,t) & = \sum_n \varepsilon^n h^{(n)}(x_1,t).
\end{aligned}
\end{equation} 
With this ansatz we can split the nonlinear system \eqref{eq:all2} in an equivalent sequence of linear problems where the evolution of the $ k $-th term in the series is determined by the evolution of the preceding $ k-1 $ unknowns. 

Using that
$$
\frac{1}{\left(1+y^2\right)^{3/2}}=1+\mathcal{O}(y^2),
$$
we find that the first two unknowns solve
\begin{subequations}\label{eq:n0s2}
\begin{align}
\Delta \Phi^{(0)} &=0 ,  &&\text{in } \Omega\times[0,T]\,,\\
\Phi^{(0)}  &= \chi^{(0)}  \qquad &&\text{on }\Gamma\times[0,T],\\
\chi^{(0)} _t &=- h^{\pare{0}} +\beta\partial_{1}^2h^{\pare{0}} -\alpha_1^{2}\partial_2^2 \Phi^{(0)}  &&\text{ on }\Gamma\times[0,T],\\
h_t^{(0)} &=\partial_2\Phi^{(0)} +\alpha_2\partial_{1}^2 h^{\pare{0}} &&\text{ on }\Gamma\times[0,T].
\end{align}
\end{subequations}
and
\begin{subequations}\label{eq:n1s2}
\begin{align}
\Delta \Phi^{(1)} &=\partial_1^2 h^{\pare{0}} \ \partial_2 \Phi^{(0)} + 2\partial_1 h^{\pare{0}} \ \partial_{12}\Phi^{(0)}  ,  &&\text{in } \Omega\times[0,T]\,,\\
\Phi^{(1)}  &= \chi^{(1)} \qquad &&\text{on }\Gamma\times[0,T],\\
\chi^{(1)}_t &=\frac{1}{2}\left[(\partial_2\Phi^{(0)})^2-(\partial_1\Phi^{(0)})^2\right]&&\nonumber\\
&\quad- h^{\pare{1}}+\beta\partial_{1}^2h^{\pare{1}}-\alpha_1^{2}\partial_2^2\Phi^{(1)} +\alpha_2\partial_2\Phi^{(0)} \partial_1^2 h^{\pare{0}}&&\text{ on }\Gamma\times[0,T],\\
h^{\pare{1}}_t&=-\partial_1 h^{\pare{0}} \partial_1 \Phi^{(0)} +\partial_2\Phi^{(1)}+\alpha_2\partial_{1}^2 h^{\pare{1}}&&\text{ on }\Gamma\times[0,T].
\end{align}
\end{subequations}
These problems are linear and then they can be explicitly solved. After certain (long) computations \cite{granero2019models}, we find that the truncation
$$
h=h^{(0)}+\varepsilon h^{(1)}
$$
solves the so-called $h-$model for viscous water waves
\begin{multline}\label{models2}
h_{tt}-(\alpha_1+\alpha_2)\partial_{1}^2 h_t+ \Lambda h+\beta\Lambda^3 h+\alpha_1\alpha_2\partial_{1}^4 h\\
= \varepsilon\bigg\lbrace-\Lambda\left(\left(H h_t\right)^2\right)+\partial_1\comm{H}{h}\Lambda h +\beta\partial_1\comm{H}{h}\Lambda^3 h
\\
+\alpha_2\partial_1\comm{H}{H h _t}H \partial_1 ^2 h+\alpha_2\Lambda\left(H h _t H \partial_1 ^2 h \right)
+\alpha_1\alpha_2\partial_1\comm{\partial_1^2}{h}\Lambda\partial_{1} h\\
-\alpha_1\partial_1\comm{\partial_1^2}{h} H h _t
-\alpha_2^2\partial_1\comm{H}{\partial_{1}^2 h}\partial_{1}^2 h \bigg\rbrace,
\end{multline}
up to an $\mathcal{O}(\varepsilon^2)$ correction term. We observe that, when $\alpha_2=\alpha_1$, equation \eqref{models2} is an asymptotic model of the damped water waves system proposed by Dias, Dyachenko, and Zakharov \cite{dias2008theory}. Similarly, if $\alpha_2=0$, equation \eqref{models2} is an asymptotic model of the viscous water waves system proposed by Wu, Liu \& Yue \cite{wu2006note}.

Equation \eqref{models2} is a bidirectional wave equation of nonlocal type. We can also obtain a unidirectional model using a standard far-field change of variables
$$
\xi=\alpha-t,\quad \tau=\varepsilon t.
$$
Then, following the computation in \cite{granero2021global} and changing back the notation for our space and time variables, we find that, up to an $O(\varepsilon^2)$ correction, $f=\Lambda h$ in the far field variables solves
\begin{multline}\label{eq:univiscous}
2\varepsilon f_{t}=\mathcal{N}\partial_1 f+(\alpha_1+\alpha_2)\mathcal{N}\partial_1^2f+ \mathcal{N}H f-\beta \mathcal{N}H \partial_1^2 f+\alpha_1\alpha_2\mathcal{N}\partial_1^3 f\\
-\varepsilon \mathcal{N}\bigg{\lbrace}2f\partial_1 f+\Lambda\comm{H}{\Lambda^{-1}f}f +\beta\Lambda\comm{H}{\Lambda^{-1}f}\Lambda^2 f
\\
-\alpha_2\Lambda\comm{H}{f}\partial_1 f+\alpha_2\partial_1\left(f\partial_1 f \right)
+\alpha_1\alpha_2\Lambda\comm{\partial_1^2}{\Lambda^{-1}f}\partial_1f\\
+\alpha_1\Lambda\comm{\partial_1^2}{\Lambda^{-1}f} f
-\alpha_2^2\Lambda\comm{H}{\Lambda f}\Lambda f\bigg{\rbrace},
\end{multline}
where the operator $\mathcal{N}$ is defined as follows
$$
\mathcal{N}=\left(1-\left(\frac{\alpha_1+\alpha_2}{2}\right)^2\partial_1^2\right)^{-1}\left(1-\left(\frac{\alpha_1+\alpha_2}{2}\right)\partial_1\right),
$$
We observe that the terms $O(\varepsilon \alpha_1\alpha_2)$ and $O(\varepsilon \alpha_2^2)$ are much smaller than the other contributions. Neglecting now the nonlinear terms that are $O(\varepsilon \alpha_1\alpha_2)$ and $O(\varepsilon \alpha_2^2)$ we conclude
\begin{multline}\label{eq:univiscous2}
2\varepsilon f_{t}=\mathcal{N}\partial_1 f+(\alpha_1+\alpha_2)\mathcal{N}\partial_1^2f+ \mathcal{N}H f-\beta \mathcal{N}H \partial_1^2 f+\alpha_1\alpha_2\mathcal{N}\partial_1^3 f\\
-\varepsilon \mathcal{N}\bigg{\lbrace}2f\partial_1 f+\Lambda\comm{H}{\Lambda^{-1}f}f +\beta\Lambda\comm{H}{\Lambda^{-1}f}\Lambda^2 f
\\
-\alpha_2\Lambda\comm{H}{f}\partial_1 f+\alpha_2\partial_1\left(f\partial_1 f \right)
+\alpha_1\Lambda\comm{\partial_1^2}{\Lambda^{-1}f} f
\bigg{\rbrace},
\end{multline}

\subsection{Models for the case of Non-Newtonian fluids}
Non-newtonian fluids in which both time reversal and parity are broken can display a dissipationless viscosity that is odd under each of these symmetries \cite{abanov2018odd,abanov2020hydrodynamics,abanov2019free,avron1998odd,banerjee2017odd,ganeshan2017odd,ganeshannon,
soni2019odd,souslov2019topological,lapa2014swimming}. This viscosity is called \emph{odd} or Hall viscosity.  In this case the stress tensor takes the following form
$$
\mathscr{T}^i_j=-p\delta^i_j+\nu_o\left(\nabla_i u^\perp_j+\nabla^\perp_i u_j\right),
$$
where $a^\perp=(a_2,-a_1).$ The corresponding equations take the following form
\begin{align*}
u_t+(u\cdot\nabla)u+\nabla p-\nu_o\Delta u^\perp&=0 \quad x\in\Omega(t),\; t\in[0,T],\\
\nabla\cdot u&=0\quad x\in\Omega(t),\; t\in[0,T].
\end{align*}
As before, after considering the appropriate boundary term to capture the viscous effect at the boundary layer, we are left with the following free boundary problem in dimensionless formulation \cite{abanov2018odd,abanov2019free,ganeshannon,granero2021motion}
\begin{subequations}\label{eq:alldimensionless}
\begin{align}
\Delta \phi&=0&&\text{ in }\Omega(t)\times[0,T],\\
\phi  &= \chi \qquad &&\text{ on }\Gamma(t)\times[0,T],\\
\chi_t &=-\frac{\varepsilon}{2}|\nabla \phi|^2- h+\frac{\beta\partial_{1}^2h}{\left(1+\left(\varepsilon\partial_{1}h\right)^2\right)^{3/2}} &&\nonumber\\
&\quad+\varepsilon\partial_2\phi\left(\nabla\phi\cdot(-\varepsilon\partial_1 h,1)\right)\nonumber\\
&\quad+\frac{\alpha}{(1+\varepsilon^2 \partial_1 h^2)^{1/2}}\partial_1\left(\frac{h_t}{(1+\varepsilon^2\partial_1 h^2)^{1/2}}\right)&&\text{ on }\Gamma(t)\times[0,T],\\
h_t&=\nabla\phi\cdot (-\varepsilon\partial_1 h,1) &&\text{ on }\Gamma(t)\times[0,T].
\end{align}
\end{subequations}
Our goal in this section is then to obtain new asymptotic models that approximate the main dynamics of the previous free boundary problem up to $\mathcal{O}(\varepsilon^2)$. Using the diffeomorphism
$$
\Psi(x_1,x_2,t)=(x_1,x_2+\varepsilon h(x_1,t)),
$$
to fix the domain together with \eqref{eq:ansatz} and the same precedure as before, we find the following nonlinear and nonlocal wave equation that captures (up to an error of order $\mathcal{O}(\varepsilon^2)$) the dynamics of a gravity-capillary surface wave in a fluid with odd viscosity
\begin{align}\label{eq:final_eq}
h_{tt}&=-\Lambda h-\beta\Lambda^3h+\alpha \Lambda \partial_1 h_{t}+\varepsilon\left[-\Lambda\left((H h_t)^2\right)+\partial_1\left(\comm{H}{h}\Lambda h\right)\right]\nonumber\\
&\quad +\varepsilon\partial_1\left[-\alpha\comm{H}{h}\Lambda \partial_1 h_{t}+\beta\comm{H}{h}\Lambda^3 h\right]&&\text{ on }\Gamma\times[0,T].
\end{align}
Using the previous far field change of variables 
$$
\xi=\alpha-t,\quad \tau=\varepsilon t,
$$
and abusing notation, we have that $f=\Lambda h$ in the far field variables solves
\begin{multline}\label{eq:univiscous3}
\varepsilon f_{t}=\mathcal{M}\partial_1 f+ \mathcal{M}H f+(\alpha-\beta)\mathcal{M}H\partial_1^2f\\
+\varepsilon\mathcal{M}\left\{-2f\partial_{1}f-\Lambda\comm{H}{\Lambda^{-1}f}f+(\alpha-\beta)\Lambda\comm{H}{\Lambda^{-1}f}\Lambda^2 f\right\},
\end{multline}
where the operator $\mathcal{M}$ is defined as follows
$$
\mathcal{M}=\left(2+\alpha\Lambda\right)^{-1}.
$$

\section{Models of surface waves in perfect fluids: truncation in the steepness}\label{CGSW}
Let us emphasize that when $\alpha_2=\alpha_1=\alpha=0$, equations \eqref{models2} and \eqref{eq:final_eq} recover the quadratic $h-$model in \cite{aurther2019rigorous, matsuno1992nonlinear, matsuno1993two, matsuno1993nonlinear, AkMi2010, AkNi2010}
\begin{align}\label{eq:final_eq2}
h_{tt}&=-\Lambda h-\beta\Lambda^3h+\varepsilon\left[-\Lambda\left((H h_t)^2\right)+\partial_1\left(\comm{H}{h}\Lambda h\right)\right] +\varepsilon\beta\partial_1\comm{H}{h}\Lambda^3 h&&\text{ on }\Gamma\times[0,T].
\end{align}
As a consequence, we find that the corresponding unidirectional wave equation for the case of inviscid fluids is
\begin{equation}\label{eq:uniinviscid}
2\varepsilon f_{t}=\partial_1 f+ H f-\beta H \partial_1^2 f
-\varepsilon \bigg{\lbrace}2f\partial_1 f+\Lambda\comm{H}{\Lambda^{-1}f}f +\beta\Lambda\comm{H}{\Lambda^{-1}f}\Lambda^2 f\bigg{\rbrace}.
\end{equation}
Once that we have established the asymptotic models for gravity-capillary surface waves, in the next section we turn our attention to the case of internal waves.

\section{Models of internal waves in perfect fluids: truncation in the nonlocality}\label{GS}
In this section we are going to consider the case of internal waves in perfect fluids. This problem is more challenging than the case of surface waves and it is related to some classical hydrodynamical instabilities. Such hydrodynamical instabilities could potentially even start a finite time singularity formation in the flow.

The problem of a finite time singularity formation for a free boundary problem is challenging due to the fact that, in addition to the blow-up of the solution happening in the bulk of the fluid, now a singularity can possibly occur at the boundary of the domain. In particular, we have to study the geometrical properties of the free boundary to discard possible pathological or unstable behavior (cusp formation, self-intersection of the free boundary, roll-up) leading to blow-up scenarios.

Two of these instabilities occuring at interfaces between two different fluids are the Rayleigh-Taylor and the Kelvin-Helmholtz instabilities. The Rayleigh-Taylor (RT) instability (named after Rayleigh \cite{Ra1878} and Taylor \cite{Ta1950}) of an interface between two fluids of different densities occurs when the high-density fluid is placed over a lower-density fluid under gravity.

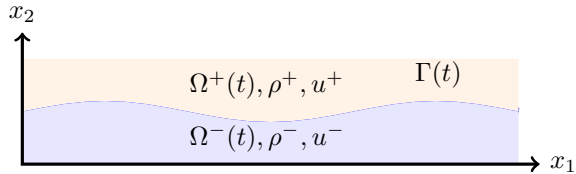
\begin{figure}[h]
%
\begin{center} 
\begin{tikzpicture}[domain=0:3*pi, scale=0.7] 
\draw[line width=1mm, smooth, color=blue] plot (\x,{0.2*sin(\x r)+1});
\fill[orange!10] plot[domain=0:3*pi] (\x,2) -- plot[domain=3*pi:0] (\x,{0.2*sin(\x r)+1});
\fill[blue!10] plot[domain=0:3*pi] (\x,0) -- plot[domain=3*pi:0] (\x,{0.2*sin(\x r)+1});
\draw[very thick,<->] (3*pi+0.4,0) node[right] {$x_1$} -- (0,0) -- (0,2.5) node[above] {$x_2$};
\node[right] at (2*pi+1,1.7) {$\Gamma(t)$};
\node[right] at (3,0.5) {$\Omega^-(t),\rho^-,u^-$};
\node[right] at (3,1.5) {$\Omega^+(t),\rho^+,u^+$};
\end{tikzpicture}   
\end{center}
\vspace{-.1 in}
\caption{\emph{A representation of an internal wave. The blue curve is an illustration of the interface $\Gamma(t)$, separating both fluids. The fluid on top of  $\Gamma(t)$ has density $\rho^+$ and velocity field $u^+$, while the fluid on the bottom has density $\rho^-$ and velocity $u^-$. }}
 \label{fig0}
\end{figure}

In the case where the two fluids present a good stratification, \emph{i.e.} the lighter fluid is on top, or even when they have the same density, the flow can still be unstable due to the so-called the Kelvin-Helmholtz (KH) instability. The KH instability (named after Lord Kelvin \cite{thomson1871xlvi} and von Helmholtz \cite{helmholtz1868uber}) can occur when there is a velocity difference across the interface between two fluids.

Due to the ubiquity of these physical phenomena, the problem of obtaining simple models that capture the main part of the dynamics of a two phase Euler flow is very important.

\subsection{The Birkhoff-Rott formulation of the Euler system}
The two-phase Euler equations modelling the motion of two incompressible, irrotational and inviscid fluids with labels $+$ and $-$ (see Figure 1) can be written as
\begin{align}
\label{euler}
\rho^\pm\left(u_t^\pm+(u^\pm\cdot\nabla)u^\pm\right)+\nabla p^\pm+g\rho^\pm e_2&=0\quad x\in \Omega^\pm(t),\,t\in[0,T]\\
\partial_t \rho^\pm+ u^\pm\cdot\nabla \rho^\pm&=0\quad x\in \Omega^\pm(t),\,t\in[0,T]\\
\nabla\cdot u^\pm&=0\quad x\in \Omega^\pm(t),\,t\in[0,T]\\
\nabla\times u^\pm&=0\quad x\in \Omega^\pm(t),\,t\in[0,T].
\end{align}
We denote the jump of $f(x,t)$ across $\Gamma(t)$ by
$$\jump{f}=f^+-f^-\,.$$
The previous system is supplemented with the following jump conditions:
\begin{equation}\label{jump_cond}
\jump{u\cdot n}=0\,, \ \ \ \jump{p}=\gamma\mathcal{K} \,, \ \ \text{ on } \Gamma(t) \,,
\end{equation}
where $n$ denotes the outward pointing unit normal to $\Omega^-$, $\gamma>0$ is the surface tension coefficient and $\mathcal{K}$ is the curvature. Here, the fluid with label $+$ (resp. $-$) has density $\rho^+$ and pressure $p^+$ (resp. $\rho^-$ and $p^-$) and occupies the volume $\Omega^+$ (resp. $\Omega^-$). We assume that $\bbR^2=\Omega^+\cup\Omega^-$ and the fluids are separated by the interface $\Gamma(t)$. This interface is parametrized as
$$
z(\alpha,t)=(z_1(\alpha,t),z_2(\alpha,t)).
$$
The curve $z$ is transported by the fluids. Furthermore, as the shape of the bulks of the fluids only depends on the normal component of the fluid velocity, we can add any tangential contribution to the velocity without affecting the problem. Then, the evolution equation for the parameterization of $\Gamma(t)$ is written as
\begin{equation}\label{eqb}
z_t( \alpha  ,t) =u(z( \alpha ,t),t)+c( \alpha ,t) \partial_\alpha z(\alpha ,t),
\end{equation}
where $c$ is an arbitrary function that we will fix below.

Due to the fact that each fluid is irrotational, the vorticity $\omega$ is supported in the interface between them and takes the form of the following measure
$$
\langle \omega ,\varphi\rangle = \int_ \mathbb{R}   \varpi(\beta,t)  \varphi( z( \beta , t)) d \beta \,,
$$
where $\varpi$ is a scalar function defined on $\Gamma(t)$. 

In what follows we are going to summarize how to obtain reduce the previous Euler system to a system of three nonlocal PDEs, one for the scalar function $\varpi$ and two PDEs for the parametrization $z$.

Due to the incompressibility of the flow we can use the Biot-Savart law to recover the velocity field from the vorticity
$$
u^\pm(x_1,x_2,t)=\frac{1}{2\pi}\text{P.V.}\int_\bbR \varpi(\beta) \left(-\frac{x_2-z_2(\beta,t)}{|(x_1,x_2)-z(\beta,t)|^2}, \frac{x_1-z_1(\beta,t)}{|(x_1,x_2)-z(\beta,t)|^2}\right)d\beta.
$$
Taking limits appropriately in the Biot-Savart integral, we obtain that
$$
\varpi=-\jump{u\cdot \partial_\alpha z}.
$$
Then, we can define the equation for the curve as
\begin{equation}\label{eqcurve}
z_t( \alpha  ,t) =\frac{1}{2\pi}\text{P.V.}\int_\bbR \varpi(\beta) \left(-\frac{z_2(\alpha,t)-z_2(\beta,t)}{|z(\alpha,t)-z(\beta,t)|^2}, \frac{z_1(\alpha,t)-z_1(\beta,t)}{|z(\alpha,t)-z(\beta,t)|^2}\right)d\beta+c( \alpha ,t) \partial_\alpha z(\alpha ,t).
\end{equation}
This is the so-called Birkhoff-Rott equation for a vortex sheet. Since the velocity is irrotational, there exist two velocity potentials $\phi^\pm$ satisfying
$$
u^\pm=\nabla \phi^\pm.
$$ 
and as a consequence
\begin{equation}\label{ss1}
\varpi = - \partial _ \alpha \jump{\phi} \,.
\end{equation} 
Using the ideas in \cite{cordoba2010interface}, we can further obtain that 
\begin{align}
\varpi_t&=-\partial_\alpha\bigg{[}- \frac{2\jump{p}}{\rho^++\rho^-} +
 \frac{A}{4\pi^2} \left|\int \varpi( \beta) \frac{(z( \alpha ,t) - z(\beta,t))^\perp}{| z( \alpha ,t) - z(\beta,t)|^2} d\beta\right|^2
 -\frac{A}{4} \frac{\varpi( \alpha ,t)^2}{|{\partial_\alpha} z( \alpha ,t)|^2}
\nonumber\\
&\quad \qquad
+  \frac{A}{\pi}  \int \varpi( \beta) \frac{(z( \alpha ,t) - z(\beta,t))^\perp}{| z( \alpha ,t) - z(\beta,t)|^2}  \cdot \partial_ \alpha z(\alpha ,t) c( \alpha ,t) d\beta 
\nonumber \\
&\qquad \qquad
-c( \alpha ,t) \varpi( \alpha ,t)  -2 A g z_2 \bigg{]} \nonumber \\
& \qquad \qquad
+\frac{A}{\pi} \partial_t\bigg{[}
 \int \varpi( \beta) \frac{(z( \alpha ,t) - z(\beta,t))^\perp}{| z( \alpha ,t) - z(\beta,t)|^2}  \cdot \partial_ \alpha z(\alpha ,t)  d\beta  
\bigg{]} \,,
\label{eqvorticity}
\end{align}
where $A$ denotes the Atwood number
$$
A=\frac{\rho^+-\rho^-}{\rho^++\rho^-}.
$$
As a consequence, the two-phase irrotational and incompressible Euler system in its Birkhoff-Rott kernel formulation is reduced to the previous two equations for $z$ \eqref{eqcurve} and the scalar equation for $\varpi$ \eqref{eqvorticity}.

In the rest of this section we are going to simplify the equations \eqref{eqcurve} and \eqref{eqvorticity} while retaining the main dynamics.

Also, let us remark that, at least formally, the case with vacuum on top,
$$
A=-1,
$$
corresponds with the surface water waves.
\subsection{Models for the case of a curve: the $z-$model}
Following \cite{GS}, we first take $c\equiv 0$ in \eqref{eqcurve}. Furthermore, approximating
$$
\frac{z(\alpha,t)-z(\beta,t)}{\alpha-\beta}\approx \partial_\alpha z(\alpha,t)
$$
and invoking Tricomi's identity 
\begin{equation}\label{tricomi}
2H(fHf)=(Hf)^2-f^2 \,,
\end{equation} 
we find the so-called $z-$model\footnote{Note the convention for $u^\perp$ that we are using in this paper is different than the one in \cite{GS}.}
\begin{subequations}\label{z-model}
\begin{align}
z_t(  \alpha ,t) & = -\frac{1}{2}H\varpi(\alpha,t) \frac{(\partial_\alpha z(\alpha,t))^\perp}{|\partial_\alpha z(\alpha,t)|^2}  \,, \label{z-model1} \\
\varpi_t(  \alpha ,t) &= -\partial_\alpha\bigg{[}\frac{A}{2} \frac{1}{|\partial_\alpha z( \alpha ,t)|^2}H\left(\varpi(\alpha,t)H\varpi( \alpha ,t)\right)
 - \frac{2\jump{p}}{\rho^++\rho^-}  -2 A g z_2 \bigg{]}\,.  \label{z-model2}
\end{align}
\end{subequations}
As explained in \cite{GS}, the previous approximation for $\partial_\alpha z$ can be understood as a localization in the singular integral operator to points near $\alpha.$ 

This system (see also \cite{kuznetsov1993surface}) has the advantage of being easily computable while at the same time being able to describe rather intrincate Rayleigh-Taylor instability dynamics. In fact, Canfield, Denissen, Francois, Gore, Rauenzahn, Reisner \and Shkoller \cite{canfield2020comparison} have compared with real experiments and it has been found that the $z-$model performed better than previous models for all cases that were explored. As a consequence, the $z-$model capture the main physical properties of several real experiments as well as some theoretical predictions available in the physics literature \cite{GS}. Moreover, the computational costs of simulating our models are negligible compared to the costs of simulating the whole two-phase Euler equations in the RT unstable case \cite{GS,ramani2020multiscale}.

The previous system can be written as a system of nonlinear wave equations. Indeed, using that
$$
H\varpi=-2z_t\cdot (\partial_\alpha z)^\perp\text{ and }\varpi=2H(z_t\cdot (\partial_\alpha z)^\perp),
$$
and taking a time derivative, we find the equivalent system
\begin{align}
z_{tt}&= -\Lambda\bigg{[}\frac{A}{|\partial_\alpha z|^2}H\left(z_t\cdot (\partial_\alpha z)^\perp H(z_t\cdot (\partial_\alpha z)^\perp)\right)
 + \frac{\jump{p}}{\rho^++\rho^-}  + A g z_2 \bigg{]} \frac{(\partial_\alpha z)^\perp}{|\partial_\alpha z|^2} \nonumber\\ 
 &\quad +z_t\cdot (\partial_\alpha z)^\perp\left(\frac{(\partial_\alpha z_t)^\perp}{|\partial_\alpha z|^2}-\frac{(\partial_\alpha z)^\perp 2(\partial_\alpha z\cdot \partial_\alpha z_t)}{|\partial_\alpha z|^4}\right)\,.\label{eqztt}
\end{align}

\subsection{Models for the Kelvin-Helmholtz instability} We observe that the previous $z-$model can also be used to model the case of equal densities $\rho^+=\rho^-$, \emph{i.e.} $A=0$. This case formally corresponds to the case of the Kelvin-Helmholtz instability. Using the equations \eqref{z-model} with $A=\gamma\equiv0$ we find the $z-$model for the Kelvin-Helmholtz instability: 
\begin{subequations}\label{KH-model}
\begin{align}
z_t(  \alpha ,t) & = -\frac{1}{2}H\varpi(\alpha,t) \frac{(\partial_\alpha z(\alpha,t))^\perp}{|\partial_\alpha z(\alpha,t)|^2}  \,, \label{KHmodel1} \\
\varpi_t(  \alpha ,t) &= 0\,.  \label{KHmodel2}
\end{align}
\end{subequations}
Integrating the equation for $\varpi$, we find that
\begin{align}
z_t(  \alpha ,t) & = -\frac{1}{2}H\varpi_0(\alpha) \frac{(\partial_\alpha z(\alpha,t))^\perp}{|\partial_\alpha z(\alpha,t)|^2}  \,. \label{KHmodel21}
\end{align}

In what follows we are going to derive a refinement of \eqref{KHmodel21} for the case of the Kelvin-Helmholtz instability. To the best of our knowledge this model is new. After changing variables the Euler formulation in its Birkhoff-Rott formulation reads as follows
\begin{align}
z_t(  \alpha ,t) &= {\frac{1}{2\pi}} {P.V.} \int_\bbR \varpi(\alpha- \beta) BR(\alpha,\beta)d\beta + c( \alpha , t) \partial_ \alpha 
z (\alpha , t) \,,\label{eqwexact}\\
\varpi_t(\alpha,t)&=-\partial_\alpha\bigg{[}
 \frac{A}{4\pi^2} \left|\int_\bbR \varpi(\alpha- \beta)BR(\alpha,\beta)d\beta\right|^2
 -\frac{A}{4} \frac{\varpi( \alpha ,t)^2}{|{\partial_\alpha} z( \alpha ,t)|^2}
\nonumber\\
&\quad \qquad
+  \frac{A}{\pi}  \int_\bbR \varpi(\alpha- \beta) BR(\alpha,\beta)  \cdot \partial_ \alpha z(\alpha ,t) c( \alpha ,t) d\beta 
-c( \alpha ,t) \varpi( \alpha ,t)  -2 A g z_2 \bigg{]} \nonumber \\
& \qquad \qquad
+\frac{A}{\pi} \partial_t\bigg{[}
 \int_\bbR \varpi(\alpha- \beta) BR(\alpha,\beta)  \cdot \partial_ \alpha z(\alpha ,t)  d\beta  
\bigg{]} \,,
\label{eqwrexact}
\end{align}
where $c(\alpha,t)$ is an arbitrary reparametrization term and the Birkhoff-Rott kernel is given by
$$
BR(\alpha,\beta)=-\frac{(z(\alpha)-z(\alpha-\beta))^\perp}{|z(\alpha)-z(\alpha-\beta)|^2}.
$$
Thus, another way of looking at the previous computations is as follows: we observe that we can expand $BR$ as a Laurent series around $\beta=0$ and find that
$$
BR(\alpha,\beta)=O_{-1}(\alpha)\beta^{-1}+O_0(\alpha)+O_{1}(\alpha)\beta+....=\sum_{j\geq{-1}}O_j(\alpha)\beta^j.
$$
The computation in the previous section implies that the coefficient of the principal part of the Laurent series of $BR$ is given by
\begin{align}
O_{-1}(\alpha)=-\frac{\partial_\alpha ^\perp z(\alpha)}{|\partial_\alpha z(\alpha)|^2}.
\label{O-1}
\end{align}
Thus, our original $z-$model consists in replacing the Birkhoff-Rott kernel $BR$ by the principal part of its Laurent series. Then, in order to refine the  previous $z-$model, we want to compute further terms of the Laurent series of the function $BR$ around $\beta=0$. It's a long but straightforward computation to find that
\begin{align}
O_0(\alpha)&=-\frac{\partial_\alpha^\perp z(\alpha)}{|\partial_\alpha z(\alpha)|^4}\partial_\alpha z(\alpha)\cdot \partial_\alpha^2 z(\alpha)
-\frac{1}{2}\frac{(\partial^2 z(\alpha))^\perp}{|\partial_\alpha z(\alpha)|^2},\label{O0}
\end{align}
and
\begin{align}
O_1(\alpha)&=-\frac{1}{6}\frac{(\partial_\alpha^3 z(\alpha))^\perp}{|\partial_\alpha z(\alpha)|^2}+\frac{1}{3}\frac{\partial_\alpha^\perp z(\alpha)}{|\partial_\alpha z(\alpha)|^4}\partial_\alpha z(\alpha)\cdot\partial_\alpha^3 z(\alpha)+\frac{1}{2}\frac{\partial_\alpha^\perp \partial_\alpha z(\alpha)}{|\partial_\alpha z(\alpha)|^4}\partial_\alpha z(\alpha)\cdot\partial_\alpha^2 z(\alpha)\nonumber\\
&\quad+\frac{1}{4}\frac{\partial_\alpha^\perp z(\alpha)}{|\partial_\alpha z(\alpha)|^4}|\partial_\alpha^2 z(\alpha)|^2-\frac{\partial_\alpha^\perp  z(\alpha)}{|\partial_\alpha z(\alpha)|^6}(\partial_\alpha z(\alpha)\cdot\partial_\alpha^2 z(\alpha))^2.\label{O1}
\end{align}
Then, the refined $z-$model for the Kelvin-Helmholtz instability reads
\begin{align}
z_t(  \alpha ,t) &=-\frac{1}{2} H\varpi_0(\alpha) \frac{\partial_\alpha ^\perp z(\alpha,t)}{|\partial_\alpha z(\alpha,t)|^2}\nonumber\\
&\quad+\bigg{(}\frac{1}{6}\frac{(\partial_\alpha^3 z(\alpha,t))^\perp}{|\partial_\alpha z(\alpha,t)|^2}-\frac{1}{3}\frac{\partial_\alpha^\perp z(\alpha,t)}{|\partial_\alpha z(\alpha,t)|^4}\partial_\alpha z(\alpha,t)\cdot\partial_\alpha^3 z(\alpha,t)-\frac{1}{2}\frac{\partial_\alpha^\perp \partial_\alpha z(\alpha,t)}{|\partial_\alpha z(\alpha,t)|^4}\partial_\alpha z(\alpha,t)\cdot\partial_\alpha^2 z(\alpha,t)\nonumber\\
&\quad-\frac{1}{4}\frac{\partial_\alpha^\perp z(\alpha,t)}{|\partial_\alpha z(\alpha,t)|^4}|\partial_\alpha^2 z(\alpha,t)|^2+\frac{\partial_\alpha^\perp  z(\alpha,t)}{|\partial_\alpha z(\alpha,t)|^6}(\partial_\alpha z(\alpha,t)\cdot\partial_\alpha^2 z(\alpha,t))^2\bigg{)}\frac{1}{2\pi}\int_{\mathbb{R}}\varpi_0(\beta)\beta d\beta \,,\label{refzmodelKH}
\end{align}
where we have used that
$$
\int_\bbR \varpi_0(\alpha)d\alpha=0.
$$
This model, that should not be confused with the high order $z-$model described in \cite{ramani2020multiscale}, is new and, to the best of author's knowledge, has never been studied mathematically before.

\subsection{Models for the case of a graph: the $h-$model}
Due to the reparametrization invariance, we are allowed to chose the tangential component of the velocity $c$ in \eqref{eqcurve}. If our initial curve can be parametrized as a graph
$$
( \alpha , h(\alpha ,0))
$$
and we want to maintain this feature, we chose
\begin{equation}\label{c}
c( \alpha , t) = {\frac{1}{2\pi}} {P.V.} \int_\bbR \varpi(\alpha- \beta,t) \frac{z_2(\alpha,t)-z_2(\alpha-\beta,t)}{|z(\alpha,t)-z(\alpha-\beta,t)|^2}d\beta \,.
\end{equation} 
Then we have that
$$
(z_1)_t(  \alpha ,t) = {\frac{1}{2\pi}} {P.V.} \int_\bbR \varpi(\alpha- \beta,t) \frac{z_2(\alpha,t)-z_2(\alpha-\beta,t)}{|z(\alpha,t)-z(\alpha-\beta,t)|^2}d\beta(-1+ \partial_ \alpha z_1 (\alpha , t))=0 \,,
$$
and we conclude that
$$
z_1(\alpha,t)=z_1(\alpha,0)=\alpha,
$$
and the graph parametrization propagates. We make use of the following power series expansion for $ | \zeta | < 1$:
$$
\frac{1}{1 + \zeta ^2} = 1 - \zeta ^2 +\cdot\cdot\cdot  \,,
$$
together with the approximation
$$
\frac{h(\alpha)-h(\beta)}{\alpha-\beta}\approx \partial_\alpha h(\alpha).
$$
Then, considering only quadratic nonlinearities we find that
$$
BR(\alpha,\beta)=\left(\frac{-(h(\alpha)-h(\alpha-\beta))}{\beta^2+(h(\alpha)-h(\alpha-\beta))^2},\frac{\beta}{\beta^2+(h(\alpha)-h(\alpha-\beta))^2}\right)\approx \frac{1}{\beta}\left(-\partial_\alpha h(\alpha),1-(\partial_\alpha h(\alpha))^2\right).
$$
If we neglect terms of cubic order, we obtain the so-called $h-$model system of equations (see \cite{GS})
\begin{subequations}\label{hmodel}
\begin{align} 
h_t( \alpha ,t) & =  {\frac{1}{2}} H \varpi( \alpha ,t) \,, \\
\varpi_t(\alpha,t)&=-\partial_\alpha\bigg{[}
 \frac{A}{4} \left|H \varpi( \alpha ,t)\right|^2
 -\frac{A}{4} \varpi( \alpha ,t)^2 - \frac{2\jump{p}}{\rho^++\rho^-}   -2 A g h( \alpha ,t) \bigg{]} \,.
\end{align} 
\end{subequations}
Taking another time derivative, changing to dimensionless variables and using \eqref{tricomi}, we arrive at the so-called $h-$model derived in \cite{GS}
\begin{equation} 
h_{tt}  = A \Lambda h  -  \beta \Lambda^3 h
 - A \p_\alpha(H h_t h_t) \,.\label{wavenonlinear}
\end{equation} 
As before, this model is a bidirectional wave equation of nonlocal type. We can also obtain a unidirectional model using a standard far-field change of variables
$$
\xi=\alpha-t,\quad \tau=\varepsilon t,\quad \zeta(\xi,\tau)=\varepsilon h(\xi,\tau)
$$
where $\varepsilon$ is small parameter. Then, going back to our previous notation for the space and time variables, we find that, up to an $O(\varepsilon^2)$ correction, $f=\partial_1\zeta$ solves
\begin{equation} 
-2\varepsilon f_{t}+\partial_1 f = A H f +  \beta H\partial_1^2 f
 - A\varepsilon \p_1(H f f) \,.\label{wavenonlinearuni}
\end{equation} 
Let us emphasize that the case $A=-1$ corresponds to surface water waves. Remarkably, a very similar equation also appears in \cite{abanov2018odd} when studying the motion of surface waves in a non-newtonian fluid with odd viscosity (see \cite{granero2021motion} for more details on this problem). Equations with the same nonlinearity appear in other contexts such as one-dimensional models of the surface quasi-geostrophic equation \cite{bae2015global,castro2008global,chae2005finite}, dislocations \cite{biler2010nonlinear} or the evolution of the density of the roots of polynomials \cite{granero2020nonlocal}.

\section{Numerical simulations for the inviscid water wave models}\label{Num}
\subsection{The $z-$model for bubbles and drops}
One of the main advantages of the previous asymptotic models is that they can be easily simulated using a standard computer. Furthermore, as the $z-$model does not require that the free boundary is a graph, it can be used to quickly simulate rising bubbles and falling drops. 

In this section we are going to show different simulations of the $z-$model corresponding to bubbles and drops. To do that we consider a closed curve and use a standard Fourier collocation method with $N=2^{11}$ spatial nodes for the interval $[-\pi,\pi]$. To advance in time we use a standard explicit Runge-Kutta (4,5) integrator. This algorithm is particularly well-adapted to the problem under consideration due to the fact that the nonlocal operators involved in the PDE are multiplier operators in the Fourier variables.

We fix $g=9.8m/s^2$ and consider a initial circle of radius one
$$
z(\alpha,0)=(\cosh(\alpha),\sin(\alpha))
$$
with initial velocity identically zero. The corresponding simulations are shown in figures \ref{figbub} and \ref{figdro}.

\begin{figure}[h]
\begin{center}
\includegraphics[scale=0.3]{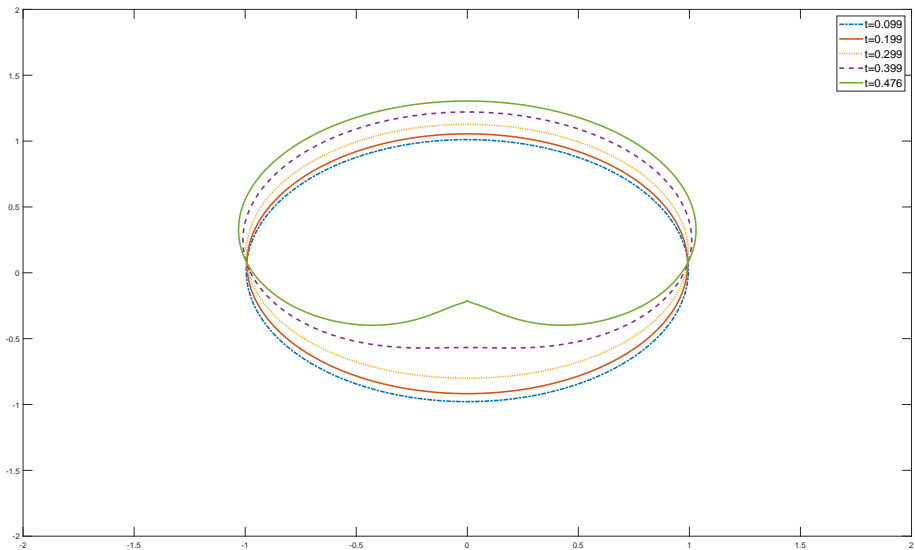}
\end{center}
\vspace{-.1 in}
\caption{Simulation of a rising bubble with $A=1/3$.}
\label{figbub}
\end{figure}

\begin{figure}[h]
\begin{center}
\includegraphics[scale=0.3]{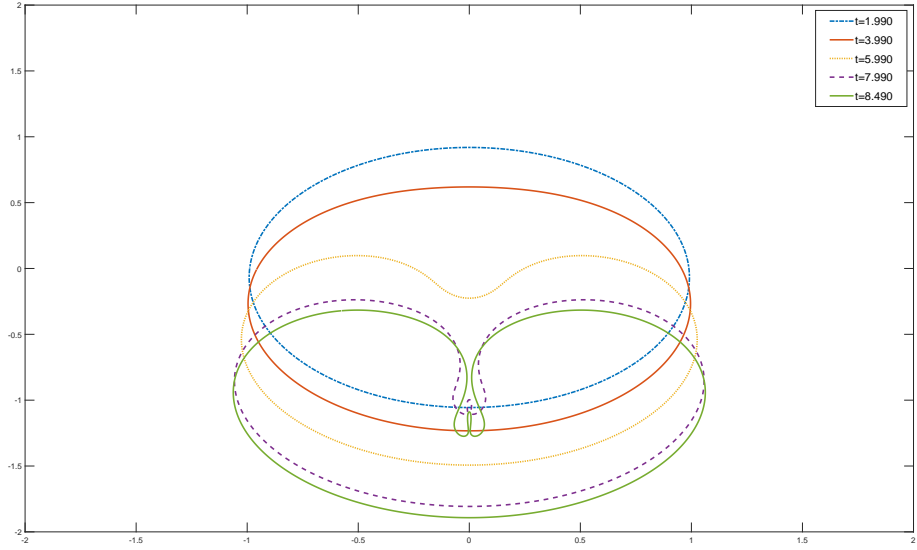}
\end{center}
\vspace{-.1 in}
\caption{Simulation of a falling drop with $A=-1/3$.}
\label{figdro}
\end{figure}

On one hand, we observe that the case of falling drops seems qualitatively more stable and seem to lead to self-intersection of the curve. On the other hand, the case of rising bubbles is more unstable and seem to lead to finite time singularities for the curvature. Remarkably, whether or not such finite time singularities actually occur for the system \eqref{eqztt} remains an open problem.

\subsection{Comparison between different models and the full water wave system}In this section we show several simulations corresponding to different models of inviscid flows and we compare the results with simulations of the full water wave system. We thank Professor Jon Wilkening for providing the simulations of the full water wave system used to create the figures in this section. These simulations of the full water wave system were obtained by Prof. Wilkening using the algorithm described in \cite{aurther2019rigorous}. 

We consider the following initial data
\begin{equation}\label{initialdata1}
h(x,0)= 1/3(\sin(x))\text{ and } h_t(x,0)=0
\end{equation}
and
\begin{equation}\label{initialdata2}
h(x,0)= 0.1\cdot(2/5\cdot(0.25\cdot(\sin(5x)+ 0.1\cdot\sin(8x))))\text{ and } h_t(x,0)=0
\end{equation}

\begin{figure}[h]
\begin{center}
\includegraphics[scale=0.3]{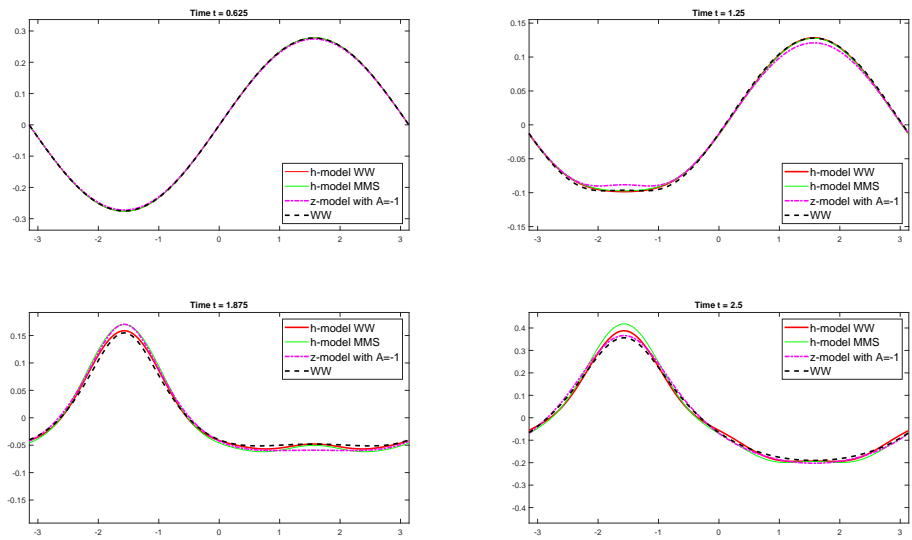}
\end{center}
\vspace{-.1 in}
\caption{Comparison between the different asymptotic models for the initial data in (\ref{initialdata1}).}
\label{figcom}
\end{figure}

\begin{figure}[h]
\begin{center}
\includegraphics[scale=0.3]{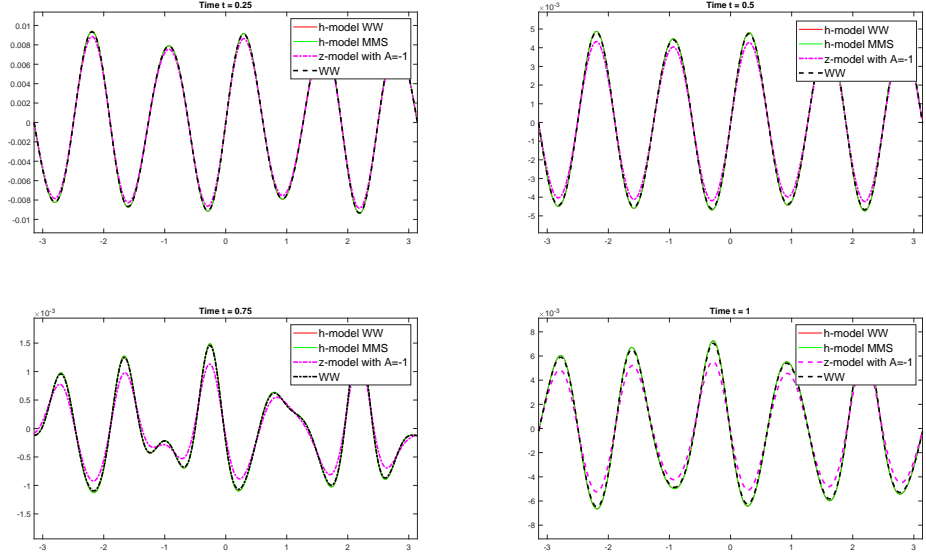}
\end{center}
\vspace{-.1 in}
\caption{Comparison between the different asymptotic models for the initial data in (\ref{initialdata2}).}
\label{figcom2}
\end{figure}

Figures \ref{figcom} and \ref{figcom2} show that the $z-$model is also qualitatively good when used to simulate the case of water waves even in the case where the free boundary remains a graph.

\section{The question of well-posedness}\label{new}
The question of well-posedness for the previous systems has been studied in several works. On the one hand, there are a number of works establishing the local well-posedness for the bidirectional models \eqref{eq:final_eq2}, \eqref{models2} and \eqref{eq:final_eq} (see \cite{aurther2019rigorous,granero2019well,granero2021motion,granero2021global}). Such a general well-posedness result holds for analytic initial data of arbitrary size and without any stability condition. On the other hand, there are also local existence results for the unidirectional models \eqref{eq:univiscous2} and \eqref{eq:univiscous3} (see \cite{granero2019well,granero2021motion,granero2021global}). These results holds for arbitrary initial data in the Sobolev class $H^s$ with $1<s$ large enough. In addition, for small initial data in $H^2$, it is known that the solution to \eqref{eq:univiscous2} is globally defined and decays towards equilibrium exponentially fast. Similarly, equation \eqref{wavenonlinear} was shown to be well-posed for initial data satisfying certain stability condition related to the stratification of the fluids \cite{GS}.

The purpose of this section is twofold: 
\begin{enumerate}
\item We will improve the global well-posedness result in \cite{granero2021global} for \eqref{eq:univiscous2}. Indeed, we will stablish the global well-posedness for small initial data in the Sobolev space $H^1$, thus, since the interface $h$ is one derivative smoother than $f$, our new result implies that $\|h\|_{H^2}$ is finite. At the same time our new result allows the surface wave to have unbounded curvature. To the best of the author's knowledge, this result is new.
\item We will prove the local existence of solution for the new model \eqref{wavenonlinearuni}. This result can be also applied to equation (51) in \cite{abanov2018odd}.
\end{enumerate}

\subsection{Well-posedness for the case of Newtonian fluids}
In this section we study the global well-posedness of the unidirectional model \eqref{eq:univiscous2} when the spatial domain is the torus $\mathbb{T}$ (or, equivalently, the interal $[-\pi,\pi]$ with periodic boundary conditions). More precisely, we have the following result

\begin{theorem}\label{theorem1} 
Let $\alpha_1,\alpha_2>0$ and $\beta\geq0$. Then given a zero mean $f_0\in H^{1}(\mathbb{T})$ satisfying
\begin{equation*}
\|f_0\|_{H^1(\mathbb{T})} \leq C(\alpha_1,\alpha_2), 
\end{equation*}
for a universal constant $C$, then the solution $f$ of equation \eqref{eq:univiscous2} is global
$$
f \in C([0,\infty),H^1(\mathbb{T}))\cap L^2([0,\infty);H^2(\mathbb{T})),
$$
and, furthermore, it decays towards the equilibrium.
\end{theorem}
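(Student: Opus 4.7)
The plan is to establish a closed nonlinear energy inequality in $H^1(\mathbb{T})$ in which the viscous part of \eqref{eq:univiscous2} supplies coercive dissipation, and in which the six quadratic nonlinear contributions can be absorbed once $\|f\|_{H^1}$ is small. The starting point is the algebraic identity $1-c^2\partial_1^2=(1-c\partial_1)(1+c\partial_1)$ with $c=(\alpha_1+\alpha_2)/2$, which collapses $\mathcal{N}$ to the simple Fourier multiplier $\mathcal{N}=(1+c\partial_1)^{-1}$ of symbol $(1+ick)^{-1}$, and in particular of order $-1$. Testing \eqref{eq:univiscous2} against $f$ in $L^2$ and using the Parseval formula $\int f\,Tf=\sum_{k}\mathrm{Re}\,\widehat{T}(k)|\hat f(k)|^2$ valid for real-valued $f$ and real multipliers $T$, every linear contribution becomes diagonal in Fourier; a direct computation of the real parts yields the dissipation identity
\begin{equation*}
\varepsilon\frac{d}{dt}\|f\|_{L^2}^2+\sum_{k\neq 0}\frac{c\bigl(k^2+|k|+\beta|k|^3+\alpha_1\alpha_2 k^4\bigr)}{1+c^2 k^2}|\hat f(k)|^2=-\varepsilon\!\int f\cdot\mathcal{N}\{\mathrm{NL}\}\,dx,
\end{equation*}
and since $\alpha_1,\alpha_2>0$ and $|k|\ge 1$ by the zero-mean condition, the sum on the LHS is bounded below by $c_0(\alpha_1,\alpha_2)\|f\|_{H^1}^2$ using $k^4/(1+c^2 k^2)\gtrsim 1$ on the integer lattice. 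Applying $\partial_1$ to the equation and testing against $\partial_1 f$ produces the analogous inequality with dissipation $\gtrsim c_0\|f\|_{H^2}^2$.

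The delicate step is the estimate of the nonlinear terms. Each of the six contributions inside $\mathcal{N}\{\mathrm{NL}\}$ is quadratic in $f$ and is at worst of the form $\Lambda\comm{H}{\Lambda^{-1}f}\Lambda^s f$ or $\Lambda\comm{\partial_1^2}{\Lambda^{-1}f}f$, which are a priori of order two in $f$ and therefore exactly at the dissipation threshold. Writing $g=\Lambda^{-1}f$ so that $g'=-Hf$ and, by the Sobolev embedding $H^1(\mathbb{T})\hookrightarrow L^\infty$, $\|g'\|_{L^\infty}\lesssim \|f\|_{H^1}$, I would combine the $L^2$ Calderón commutator bound $\|\comm{H}{g}h\|_{L^2}\lesssim \|g'\|_{L^\infty}\|h\|_{L^2}$ with the identity $\Lambda H=-\partial_1$, which rewrites the outer $\Lambda$ inside the commutator as genuinely local derivatives, together with the order $-1$ smoothing of the exterior $\mathcal{N}$. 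A term-by-term verification then gives
\begin{equation*}
\|\mathcal{N}\{\mathrm{NL}\}\|_{L^2}\leq C(\alpha_1,\alpha_2,\beta)\,\|f\|_{H^1}\|f\|_{H^2}.
\end{equation*}
Consequently, in the $L^2$ estimate the nonlinear contribution is bounded by $\|f\|_{L^2}\|f\|_{H^1}\|f\|_{H^2}\le \|f\|_{H^1}^2\|f\|_{H^2}$, while in the $\dot H^1$ estimate integrating the outer $\partial_1$ by parts onto the test function $\partial_1 f$ yields the bound $\|\partial_1^2 f\|_{L^2}\|\mathcal{N}\{\mathrm{NL}\}\|_{L^2}\lesssim \|f\|_{H^1}\|f\|_{H^2}^2$.

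Summing the two energy inequalities and using Young on the cross term produces
\begin{equation*}
\varepsilon\frac{d}{dt}\|f\|_{H^1}^2+\tfrac{c_0}{2}\|f\|_{H^2}^2\le C\varepsilon\,\|f\|_{H^1}\|f\|_{H^2}^2,
\end{equation*}
so that as long as $C\varepsilon\|f(t)\|_{H^1}\le c_0/4$ the nonlinear term is absorbed into the dissipation and one obtains $\varepsilon\tfrac{d}{dt}\|f\|_{H^1}^2+\tfrac{c_0}{4}\|f\|_{H^2}^2\le 0$. The Poincaré inequality $\|f\|_{H^1}\le \|f\|_{H^2}$ on zero-mean torus functions then gives $\|f(t)\|_{H^1}^2\le \|f_0\|_{H^1}^2 e^{-\mu t}$. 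A standard continuity argument, using the smallness hypothesis $\|f_0\|_{H^1}\le C(\alpha_1,\alpha_2)$ as the bootstrap threshold and the local existence theorem of \cite{granero2021global} as the short-time input, closes the loop and extends the solution to $[0,\infty)$ with the regularity $f\in C([0,\infty);H^1)\cap L^2([0,\infty);H^2)$ and exponential decay. The main obstacle I expect is precisely the tame commutator estimate for $\Lambda\comm{H}{\Lambda^{-1}f}\Lambda^2 f$: a crude bound would require an $H^3$ norm of $f$ and thus a smallness hypothesis one derivative higher; it is the simultaneous use of the $-1$ smoothing of $\mathcal{N}$, the identity $\Lambda H=-\partial_1$, and the Calderón structure of the commutator that keeps the argument inside the $H^2$ dissipation budget provided by the viscosity.
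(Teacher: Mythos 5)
Your argument is essentially the paper's proof: test the equation at the $H^1$ level, extract coercive $H^2$-dissipation from the linear viscous part, bound the six quadratic commutator terms by $\|f\|_{H^1}\|f\|_{H^2}^2$ (equivalently $\|f\|_{H^1}^2\|f\|_{H^2}$ after Poincar\'e) via the Calder\'on/Dawson--McGahagan--Ponce commutator estimate and Sobolev embedding, and close with smallness, Poincar\'e on the zero-mean torus, and a standard continuation argument. The only differences are cosmetic --- you work with the factored multiplier $\mathcal{N}=(1+c\partial_1)^{-1}$ and add an $L^2$ estimate to the $\dot H^1$ one, whereas the paper re-expands $\mathcal{N}=\mathcal{P}(1-\partial_1)$, tests once against $\Lambda^2 f$, and distributes $\mathcal{P}^{1/2}$ symmetrically --- and neither affects the structure of the argument.
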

\begin{proof}
The detailed proof of the global well-posedness result in \cite{granero2021global} is based on the modified energy 
$$
E_T(f)= \sup _{t\in[0, T]}\left\{e^{\alpha t}\|\hat{f}(t)\|_{\ell^1}+\|f(t)\|_{H^2(\mathbb{T})}\right\},
$$
where $\hat{f}$ denotes the Fourier series of $f$. Equipped with this definition of energy, the rest of the proof in \cite{granero2021global} is devoted to obtain an inequality of the form
$$
E_T(f)\leq C_0(f_0)+ P (E_T(f)),
$$
for certain polynomial $ P $ of degree larger than 1 and constant $C_0\pare{f_0}$ that depends on the initial data. Here we adopt a different approach. We are going to estimate directly the evolution of $\|f\|_{H^1(\mathbb{T})}$ and use the (nonlocal) parabolic character of the equation.

Without loss of generality, we take $\varepsilon=\alpha_1=\alpha_2=1$. As in \cite{granero2021global}, we start noticing that \eqref{eq:univiscous2} can be written equivalently as
\begin{multline}\label{NEW4v2}
2\varepsilon f_{t}=\mathcal{N}\partial_1f+2\mathcal{N}\partial_1^2f+ \mathcal{N}H f-\beta \mathcal{P}H \partial_1^2 f+\beta \mathcal{P}\Lambda \partial_1^2 f+\mathcal{P}\partial_1^3 f-\mathcal{P}\partial_1^4 f\\
- \mathcal{N}\bigg{\lbrace}2f\partial_1f+\Lambda\comm{H}{\Lambda^{-1}f}f +\beta\Lambda\comm{H}{\Lambda^{-1}f}\Lambda^2 f
\\
-\Lambda\comm{H}{f}\partial_1f+\partial_1\left(f\partial_1f \right)
+\Lambda\comm{\partial_1^2}{\Lambda^{-1}f} f
\bigg{\rbrace},
\end{multline}
where $\mathcal{P}$ denotes the operator 
$$
\mathcal{P}=(1-\partial_x^2)^{-1}.
$$
Now we perform the \emph{a priori} estimates in the $H^1$ Sobolev space.

Now we test \eqref{NEW4v2} against $-\partial_1^2 u$. Then we obtain that
\begin{align*}
\frac{d}{d t}\|f\|_{H^1}^2&=L+NL_1+NL_2+NL_3+NL_4+NL_5+NL_6,
\end{align*}
where
\begin{align*}
L&=\int_{\mathbb{T}}\lbrace\mathcal{N}\partial_1 f+2\mathcal{N}\partial_1^2 f+ \mathcal{N}H f-\beta \mathcal{P}H \partial_1^2 f\\
&\qquad+\beta \mathcal{P}\Lambda \partial_1^2 f+\mathcal{P}\partial_1^3 f-\mathcal{P}\partial_1^4 f\rbrace\Lambda^2 f dx,\\
NL_1&=-2\int_{\mathbb{T}}\mathcal{N}(f\partial_1 f)\Lambda^2 f dx,\\
NL_2&=-\int_{\mathbb{T}}\mathcal{N}\Lambda\comm{H}{\Lambda^{-1}f }f \Lambda^2 f dx,\\
NL_3&=-\beta\int_{\mathbb{T}}\mathcal{N}\Lambda\comm{H}{\Lambda^{-1}f}\Lambda^2 f\Lambda^2 f dx,\\
NL_4&=\int_{\mathbb{T}}\mathcal{N}\Lambda\comm{H}{f}\partial_1 f \Lambda^2 f dx,\\
NL_5&=-\int_{\mathbb{T}}\mathcal{N}\partial_1\left(f\partial_1 f \right)\Lambda^2 f dx,\\
NL_6&=-\int_{\mathbb{T}}\mathcal{N}\Lambda\comm{\partial_1^2}{\Lambda^{-1}f} f \Lambda^2 f dx.
\end{align*}
Integrating by parts, we find that
\begin{align*}
L&=-\|\mathcal{P}^{1/2}\partial_1^2f\|_{L^2}^2- \|\mathcal{P}^{1/2} \Lambda^{3/2}f\|_{L^2}^2-\beta \|\mathcal{P}^{1/2} \Lambda^{1/2}\partial_1^2f\|_{L^2}^2- \| \mathcal{P}^{1/2}\partial_3 f\|_{L^2}^2.
\end{align*}
We have to deal with the nonlinear terms. Integrating by parts, recalling the definition of $\mathcal{N}$ together with the Sobolev embedding, we can compute that
\begin{align*}
NL_1&=-\int_{\mathbb{T}}(1- \partial_1)\mathcal{P}^{1/2}(f^2)\mathcal{P}^{1/2}\partial_1^3 f dx,\\
&\leq C\|f\|_{H^1}^2\|\mathcal{P}^{1/2}\partial_1^3 f \|_{L^2}.
\end{align*}
We recall the commutator estimate in \cite{dawson2008decay})
\begin{align}\label{commutatorH}
\| \partial_1^\ell \comm{H}{U}\partial_1^m V\|_{L^p}\leq C\| \partial_1^{\ell+m}U\|_{L^\infty}\|V\|_{L^p}, && p\in(1,\infty), && \ell,m\in\mathbb{N}.
\end{align}
Then we have that
\begin{align*}
NL_2&=-\int_{\mathbb{T}}(1- \partial_1)\mathcal{P}^{1/2}H\comm{H}{\Lambda^{-1}f }f \mathcal{P}^{1/2}\partial_1^3 f dx,\\
&\leq C\|f\|_{H^1}^2\|\mathcal{P}^{1/2}\partial_1^3 f \|_{L^2}.
\end{align*}
Similarly,
\begin{align*}
NL_3&=\beta\int_{\mathbb{T}}(1- \partial_1)\mathcal{P}^{1/2}H\comm{H}{\Lambda^{-1}f}\Lambda^2 f\mathcal{P}^{1/2}\partial_1^3 f dx,\\
&\leq C\|f\|_{H^1}^2\|\mathcal{P}^{1/2}\partial_1^3 f \|_{L^2}.
\end{align*}
The term $NL_4$ can be handled in an analogous way, and we conclude
\begin{align*}
NL_4&\leq C\|f\|_{H^1}^2\|\mathcal{P}^{1/2}\partial_1^3 f \|_{L^2}.
\end{align*}
The term $NL_5$ can be estimated as follows
\begin{align*}
NL_5&=-\int_{\mathbb{T}}(1- \partial_1)\mathcal{P}^{1/2}\left(f\partial_1 f \right)\mathcal{P}^{1/2}\partial_1^3 f dx,\\
&\leq C\|f\|_{H^1}^2\|\mathcal{P}^{1/2}\partial_1^3 f \|_{L^2}.
\end{align*}
Finally, the remaining term $NL_6$ can be estimated as follows
\begin{align*}
NL_6&=-\int_{\mathbb{T}}(1- \partial_1)\mathcal{P}^{1/2}H\comm{\partial_1^2}{\Lambda^{-1}f} f \mathcal{P}^{1/2}\partial_1^3 f dx,\\
&\leq C\|f\|_{H^1}^2\|\mathcal{P}^{1/2}\partial_1^3 f \|_{L^2}.
\end{align*}
Collecting all these estimates, we find that
\begin{align*}
\frac{d}{d t}\|f\|_{H^1}^2&\leq C\|f\|_{H^1}^2\|\mathcal{P}^{1/2}\partial_1^3 f \|_{L^2}-\|\mathcal{P}^{1/2}\partial_1^3 f \|_{L^2}^2.
\end{align*}
Using that in the torus we have that
$$
\|f\|_{H^1}^2\leq C\|\mathcal{P}^{1/2}\partial_1^3 f \|_{L^2}^2,
$$
we conclude
\begin{align*}
\frac{d}{d t}\|f\|_{H^1}^2&\leq (C\|f\|_{H^1}-1)\|\mathcal{P}^{1/2}\partial_1^3 f \|_{L^2}^2.
\end{align*}
As a consequence, if $\|f_0\|_{H^1}$ is small enough, we have that 
\begin{align*}
\frac{d}{d t}\|f\|_{H^1}^2&\leq 0,
\end{align*}
and we conclude the global existence of solution using a standard continuation argument. Similarly, the uniqueness can be obtained using a contradiction argument together with the smallness of the initial data and the parabolic character of the problem. This concludes the result. 
\end{proof}

\subsection{Well-posedness for the case of inviscid fluids}
In this section we want to establish the local existence of solution for \eqref{wavenonlinearuni} for arbitrary analytic initial data. In order to do that we define the space
$$
\mathbb{A}_{\nu}=\left\{ f \in L^2, e^{\nu|n|}\hat{f}(n)\in L^1\right\}.
$$
This space is equipped with the norm
$$
\|u\|_{\mathbb{A}_{\nu}}=\|e^{\nu|n|}\hat{u}(n)\|_{L^1}.
$$
Then we have the following result:
\begin{theorem}\label{theorem2}
Let $f_0\in \mathbb{A}_{1}$ be the initial data. Then, defining
$$
\nu(t)=1-4\|f_0\|_{\mathbb{A}_{1}}t,
$$
there exists a local-in-time solution for equation \eqref{wavenonlinearuni}
$$
f\in L^\infty(0,T;\mathbb{A}_{\nu(t)}),
$$
for $0<T<(4\|f_0\|_{\mathbb{A}_{1}})^{-1}$.
\end{theorem}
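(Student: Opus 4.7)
The plan is to run an abstract Cauchy--Kovalevskaya style argument on the Fourier side, exploiting the fact that the linear part of \eqref{wavenonlinearuni} has purely imaginary Fourier symbols, so it does not affect the evolution of $|\hat f(n,t)|$. Rewriting the equation as
\[
\hat f_t(n,t)=\frac{1}{2\varepsilon}\bigl(in+iA\,\mathrm{sgn}(n)-i\beta\, n|n|\bigr)\hat f(n,t)+\frac{A}{2}\,in\,\widehat{Hff}(n,t),
\]
and using $\partial_t|\hat f(n,t)|=\mathrm{Re}(\hat f_t\overline{\hat f})/|\hat f|$, I would first observe that the linear contribution is purely imaginary multiplied by $|\hat f(n,t)|^2$, hence has zero real part, so that
\[
\partial_t|\hat f(n,t)|\;\leq\;\frac{|A|}{2}\,|n|\,|\widehat{Hff}(n,t)|.
\]

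Next, I would estimate the nonlinearity via the Banach algebra structure of $\mathbb{A}_\nu$. Writing $\widehat{Hff}(n)=\sum_{k+j=n}(-i\,\mathrm{sgn}(k))\hat f(k)\hat f(j)$ and using $|n|\leq|k|+|j|$ together with $e^{\nu|n|}\leq e^{\nu|k|}e^{\nu|j|}$ (valid since $\nu\geq 0$), I obtain
\[
\sum_{n}e^{\nu|n|}|n|\,|\widehat{Hff}(n)|\;\leq\;2\,\|f\|_{\mathbb{A}_\nu}\,\||\partial_1|f\|_{\mathbb{A}_\nu},
\]
where $\||\partial_1|f\|_{\mathbb{A}_\nu}=\sum_n|n|e^{\nu|n|}|\hat f(n)|$. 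Differentiating the time-dependent norm $\|f(t)\|_{\mathbb{A}_{\nu(t)}}$ and using $\dot\nu(t)=-4\|f_0\|_{\mathbb{A}_1}$, this yields the key differential inequality
\[
\frac{d}{dt}\|f(t)\|_{\mathbb{A}_{\nu(t)}}\;\leq\;\bigl(\dot\nu(t)+|A|\,\|f(t)\|_{\mathbb{A}_{\nu(t)}}\bigr)\||\partial_1|f(t)\|_{\mathbb{A}_{\nu(t)}}.
\]

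The closing step is a standard bootstrap. Assume by continuity that $\|f(t)\|_{\mathbb{A}_{\nu(t)}}\leq 2\|f_0\|_{\mathbb{A}_1}$ on a maximal interval $[0,T^*)$; since $|A|\leq 1$, the coefficient in parentheses is bounded by $-4\|f_0\|_{\mathbb{A}_1}+2\|f_0\|_{\mathbb{A}_1}<0$, so $\|f(t)\|_{\mathbb{A}_{\nu(t)}}$ is in fact non-increasing, staying below $\|f_0\|_{\mathbb{A}_1}$, and this strict improvement extends the interval past $T^*$ until $\nu(t)$ hits $0$, i.e. until $t=(4\|f_0\|_{\mathbb{A}_1})^{-1}$. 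Rigorous existence of a solution is obtained via a regularization scheme: cut off high frequencies ($|n|\leq N$) to get ODEs in $\ell^1$, apply the same a priori estimate uniformly in $N$, and extract a limit using a standard compactness argument. Uniqueness follows by applying the same estimate to the difference of two solutions (which satisfies a linear equation with the same purely imaginary linear symbol plus a bilinear nonlinearity).

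The main obstacle, and the step requiring care, is the interplay between the loss of one derivative in the nonlinear term $\partial_1(Hff)$ and the decrease of the analyticity radius. Crucially, one needs the factor $-\dot\nu(t)$ to strictly dominate $|A|\|f(t)\|_{\mathbb{A}_{\nu(t)}}\leq|A|\|f_0\|_{\mathbb{A}_1}$ in order to absorb $\||\partial_1|f(t)\|_{\mathbb{A}_{\nu(t)}}$; the choice of constant $4$ in the definition of $\nu(t)$ is precisely what guarantees this, using $|A|\leq 1$. The fact that the higher-order linear term $\beta H\partial_1^2 f$ carries a purely imaginary Fourier symbol is the other essential ingredient: it is what allows the scheme to work at all despite the second-order linear operator.
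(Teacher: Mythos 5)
Your proposal is correct and follows essentially the same route as the paper: you work in the scale of analytic spaces $\mathbb{A}_{\nu(t)}$ with a linearly shrinking radius, you exploit the Banach algebra property to estimate the quadratic nonlinearity $\partial_1(Hff)$ by $2\|f\|_{\mathbb{A}_\nu}\|\Lambda f\|_{\mathbb{A}_\nu}$, and you close by choosing $|\dot\nu|=4\|f_0\|_{\mathbb{A}_1}$ so that the regularizing term from the shrinking radius dominates the derivative-loss of the nonlinearity, followed by a mollifier/Picard construction. If anything, your write-up is more careful than the paper's on one point: you explicitly observe that the linear symbols $in$, $i\,\mathrm{sgn}(n)$, $in|n|$ are purely imaginary and therefore contribute nothing to $\partial_t|\hat f(n,t)|$, whereas the paper passes from $\|f_t\|_{\mathbb{A}_{\nu(t)}}$ directly to $\|\partial_1(Hff)\|_{\mathbb{A}_{\nu(t)}}$ without saying why the linear terms (including the second-order $\beta H\partial_1^2 f$) drop out — that step is only legitimate for precisely the reason you state.
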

\begin{proof}
Without loss of generality we take $\varepsilon=A=1$ in the proof. We focus on finding the appropriate estimates in the space $\mathbb{A}_{\nu(t)}$. We observe that these spaces ensure that the solution remain analytic and that the previous space is a Banach Algebra
\begin{align*}
\|fg\|_{\mathbb{A}_{\nu(t)}}&\leq \|f\|_{\mathbb{A}_{\nu(t)}}\|g\|_{\mathbb{A}_{\nu(t)}}.
\end{align*}
We compute
\begin{align*}
\frac{d}{dt}\left\|f\right\|_{\mathbb{A}_{\nu(t)}}&=\frac{d}{dt}\int_\mathbb{R} e^{\nu(t)|n|}|\widehat{f}(n,t)|dn\\
&=\int_{\mathbb{R}} \nu_t(t)|n|e^{\nu(t)|n|}|\widehat{f}(n,t)|dn+\int_{\mathbb{R}} e^{\nu(t)|n|}\text{Re}\left(\widehat{f}_t(n,t)\frac{\overline{\widehat{f}(n,t)}}{|\widehat{f}(n,t)|}\right)dn\\
&\leq \int_{\mathbb{R}}  \nu_t(t)|n|e^{\nu(t)|n|}|\widehat{f}(n,t)|dn+\left\|f_t\right\|_{\mathbb{A}_{\nu(t)}}.
\end{align*}
Then, if $0<\nu(t)$ is a decreasing function we find a regularizing contribution coming from $\nu_t$. This regularizing contribution is reflecting the fact that the strip of analyticity is shrinking.

As a consequence, if we take 
$$
0<\nu(t)=1-Ct,
$$
for $C>0$ to be fixed later, we find that

\begin{align*}
\frac{d}{dt}\left\|f\right\|_{\mathbb{A}_{\nu(t)}} &\leq -C\int_{\mathbb{R}}  |n|e^{\nu(t)|n|}|\widehat{f}(n,t)|dn+\left\|f_t\right\|_{\mathbb{A}_{\nu(t)}}\\
&\leq -C\|\Lambda f\|_{\mathbb{A}_{\nu(t)}}+\left\|\p_1(H f f)\right\|_{\mathbb{A}_{\nu(t)}}\\
&\leq -C\|\Lambda f\|_{\mathbb{A}_{\nu(t)}}+2\|\Lambda f\|_{\mathbb{A}_{\nu(t)}}\left\|f\right\|_{\mathbb{A}_{\nu(t)}}.
\end{align*}
Then, it is enough to take 
$$
C= 4\|f_0\|_{\mathbb{A}_{1}}
$$
to ensure that
\begin{align*}
\frac{d}{dt}\left\|f\right\|_{\mathbb{A}_{\nu(t)}}&\leq 0,
\end{align*}which ensures the uniform bound
$$
\left\|f\right\|_{\mathbb{A}_{\nu(t)}}\leq 0,
$$
for $T<(4\|f_0\|_{\mathbb{A}_{1}})^{-1}$. After this, the existence of solution follows from a standard mollifier approach together with Picard's theorem. Finally, the uniqueness follows from a standard continuation argument. This concludes the result.
\end{proof}

\section*{Acknowledgments} I thank Professor Jon Wilkening for providing his data for the simulations of the full water wave system. The author was supported by the project "Mathematical Analysis of Fluids and Applications" Grant PID2019-109348GA-I00 funded by MCIN/AEI/ 10.13039/501100011033 and acronym "MAFyA". This publication is part of the project PID2019-109348GA-I00 funded by MCIN/ AEI /10.13039/501100011033. This publication is also supported by a 2021 Leonardo Grant for Researchers and Cultural Creators, BBVA Foundation. The BBVA Foundation accepts no responsibility for the opinions, statements, and contents included in the project and/or the results thereof, which are entirely the responsibility of the authors.

\bibliographystyle{abbrv}

\end{document}